\theoremstyle{plain}
\newtheorem{theorem}{Theorem}[section]
\newtheorem{definition}{Definition}[section]
\newtheorem{corollary}{Corollary}[section]
\newtheorem{proposition}{Proposition}[section]
\newtheorem{remark}{Remark}[section]
\newcommand{\beq}{\begin{equation}}
\newcommand{\eeq}{\end{equation}}
\newcommand{\beqa}{\begin{eqnarray}}
\newcommand{\eeqa}{\end{eqnarray}}
\newcommand{\beqas}{\begin{eqnarray*}}
\newcommand{\eeqas}{\end{eqnarray*}}
\def\min{\operatorname{min}}
\DeclareMathOperator{\grad}{grad}
\begin{document}

\title{Weak Sharp Minima and Finite Termination of the Proximal Point Method for Convex Functions on Hadamard Manifolds}

\author{
Bento, G. C.\thanks{IME, Universidade Federal de Goi\'as,
Goi\^ania, GO 74001-970, BR (Email: {\tt glaydston@mat.ufg.br})}
\and
Cruz Neto, J. X.
\thanks{DM, Universidade Federal do Piau\'{\i},
Teresina, PI 64049-500, BR (Email: {\tt jxavier@ufpi.br}). This author
was partially supported by CNPq GRANT 301625-2008 and PRONEX--Optimization(FAPERJ/CNPq)}
}

\maketitle
\begin{abstract}
In this paper we proved that the sequence generated by the proximal point method, associated to a unconstrained optimization problem in the Riemannian context, has finite termination when the objective function has a weak sharp minima on the solution set of the problem.
\end{abstract}
{\bf Key words:} proximal point method, convex functions, weak sharp minimal, Hadamard manifolds.
\section{Introduction}
Consider the following minimization problem
\begin{eqnarray}\label{po:conv}
\begin{array}{clc}
  (P) & \min  f(p) \\
   & \textnormal{s.t.}\,\,\, p\in M,\\
\end{array}
\end{eqnarray}
where $M$ is a complete Riemannian manifold and $f:M\to\mathbb{R}$ is a function.  For a starting point $p^0\in M$, the exact proximal point method to solve optimization problems of the form \eqref{po:conv} generates a sequence $\{p^k\}\subset M$ as follows:
\begin{equation}\label{eq:prox1}
p^{k+1}\in \mbox{argmin}_{p\in M}\left\{f(p)+\lambda_k d^2(p,p^k)\right\},
\end{equation}
where $\{\lambda_k\}$ is a sequence of positive numbers and $d$ is the Riemannian distance (see Section~\ref{sec2} for a definition). This method was first considered in this context by Ferreira and Oliveira \cite{FO2002}, when $M$ is a Hadamard manifold (see Section~\ref{sec2} for a definition) and $f$ is convex. They proved that, for each $k\in \mathbb{N}$, the function $f+d^2(.\,p^k):M\to\mathbb{R}$ is 1-coercive and, consequently, that the sequence $\{p^k\}$ is well-defined, with $p^{k+1}$ being uniquely determined. Moreover, assuming that $\sum_{k=0}^{+\infty}1/\lambda_k=+\infty$ and that $f$ has a minimizer, the authors proved that the sequence $\{f(x^k)\}$ converges to the minimum value and the sequence $\{x^k\}$ converges to a minimizer point. Li et al.~\cite{ChongLi2009} extended this method for finding singularity of a multivalued vector field and proved that the generated sequence is well-defined and converges to a singularity of a maximal monotone vector field, whenever it exists.

This paper is part of a wider research program consisting of the extension of concepts and techniques of the Mathematical Programming of the Euclidean space $\mathbb{R}^n$ to Riemannian manifolds. It is noteworthy that in a number of recent research papers, several ideas, techniques and algorithms of Euclidean spaces have been extended to Riemannian manifolds and have been used for both theoretically and practical purposes; see \cite{ChongLi2009, Absil2007,Teboulle2004,  Azagra2005, Alvarez2008, BP09, XFLN2006, FS02, FO06, lili09-1,PP08,PO2009, Wang2011-2,wanglihuMa2011,Wang2008,Wang2006-1,Bento2011-2, BFO2010, Bento2012, Tang2011, Wang2011-3, Ferreira2012, Zhu2007, Boris2011} and the references therein. We observe that these extensions allow the solution of some nonconvex constrained problems in Euclidean space. More precisely, nonconvex
problems in the classic sense may become convex with the introduction of an adequate Riemannian metric on the manifold (see, for example \cite{XFLN2006, Bento2011-2}). 

Following the ideas of Ferris \cite{Ferris1991}, we proved in this paper that the sequence generated by the proximal point method associated to the problem \eqref{po:conv} has finite termination when the objective function is convex and the solution set of the Problem \ref{po:conv} is a set of weak sharp minimizers for $f$. As far as we know, the notion of sharp minimizer was introduced by Polyak \cite{Polyak1979} for the case of finite-dimensional Euclidean spaces; see also \cite[page 205]{Polyak1987}. In this particular case it is know that a necessary and sufficient condition for $\bar{p}$ be sharp minimum is that $0\in \mbox{int}\partial f(\bar{p})$.  Rockafellar \cite{Rock1976} showed that,  in a space with linear structure (Hilbert space), this is a sufficient condition for finite termination of the proximal point method. Afterwards, Burke and Ferris \cite{Ferris1993} extended the notion of sharp minima to what became known as weak sharp minima, mainly to include the possibility of multiple solutions, and extended the previous necessary and sufficient condition for characterize the solution set of a minimization problem as a set of weak sharp minimizers. Li et al. \cite{Boris2011} extended the notion of weak sharp minimizer to optimization problems on Riemannian manifolds as well as the previous result which relates finite termination of the proximal point method with weak sharp minima, summarized in Proposition \ref{propo:ms1}.

The organization of our paper is as follows. In Section~\ref{sec2}, some notations and results of Riemannian geometry as well as some fundamental properties and
notations  of convex analysis on Hadamard manifolds, are presented. In Section~\ref{sec:3}, it is  presented the definition of weak sharp minima as well as some of basic related results, and proved the main resulted of the paper. Finally, in Section~\ref{sec4}, we made some last considerations.

\section{Notation and terminology} \label{sec2}

In this section we introduce some fundamental  properties and
notations on Riemannian geometry and convex analysis on Hadamard manifolds which will be
used later.

\subsection{Preliminaries on Riemannian Geometry} 
In this section we introduce some fundamental  properties and
notations on Riemannian geometry. These basics facts can be
found in any introductory book on Riemannian geometry, such as in
\cite{MP92} and \cite{S96}.

Let $M$ be a $n$-dimentional connected manifold. We denote by
$T_pM$  the $n$-dimentional {\it tangent space} of $M$ at $p$, by
$TM=\cup_{p\in M}T_pM$ {\itshape{tangent bundle}} of $M$ and by
${\cal X}(M)$ the space of smooth vector fields over $M$. When $M$
is endowed with a Riemannian metric $\langle \,,\, \rangle$, with
the corresponding norm denoted by $\| \; \|$, then $M$ is now a
Riemannian manifold. We denote by $\mathcal{B}_p:=\{v\in T_pM: \|v\|\leq1\}$ the closed unit  ball of $T _pM$. Recall that the metric can be used to define
the lenght of piecewise smooth curves $\gamma:[a,b]\rightarrow M$
joining $p$ to $q$, i.e., such that $\gamma(a)=p$ and
$\gamma(b)=q$, by
\[
l(\gamma)=\int_a^b\|\gamma^{\prime}(t)\|dt,
\]
and, moreover, by minimizing this length functional over the set
of all such curves, we obtain a Riemannian distance $d(p,q)$ which
induces the original topology on $M$.  Given a nonempty set $D\subset M$, the distance function associated with $D$ is given by 
\[
M\ni p \longmapsto d_D(p):=\inf\{d(q,p): q\in M\}\in \mathbb{R}_+.
\]
The metric induces a map
$f\mapsto\grad f\in{\cal X}(M)$ which associates to each smooth function 
on $M$ its gradient via the rule $\langle\grad
f,X\rangle=d f(X),\ X\in{\cal X}(M)$. Let $\nabla$ be the
Levi-Civita connection associated to $(M,{\langle} \,,\,
{\rangle})$. A vector field $V$ along $\gamma$ is said to be {\it
parallel} if $\nabla_{\gamma^{\prime}} V=0$. If $\gamma^{\prime}$
itself is parallel we say that $\gamma$ is a {\it geodesic}. Given
that geodesic equation $\nabla_{\ \gamma^{\prime}}
\gamma^{\prime}=0$ is a second order nonlinear ordinary
differential equation, then geodesic $\gamma=\gamma _{v}(.,p)$ is
determined by its position $p$ and velocity $v$ at $p$. It is easy
to check that $\|\gamma ^{\prime}\|$ is constant. We say that $
\gamma $ is {\it normalized} if $\| \gamma ^{\prime}\|=1$. The
restriction of a geodesic to a  closed bounded interval is called
a {\it geodesic segment}. A geodesic segment joining $p$ to $q$ in
$ M$ is said to be {\it minimal} if its length equals $d(p,q)$ and
this geodesic is called a {\it minimizing geodesic}. 
A Riemannian manifold is {\it complete} if geodesics are defined
for any values of $t$. Hopf-Rinow's theorem (\cite[Theorem 1.1, page 84]{S96}) asserts that if this
is the case then any pair of points, say $p$ and $q$, in $M$ can
be joined by a (not necessarily unique) minimal geodesic segment.
Moreover, $( M, d)$ is a complete metric space and bounded and
closed subsets are compact. Take $p\in M$. The {\it exponential
map} $exp_{p}:T_{p}  M \to M $ is defined by  $exp_{p}v\,=\,
\gamma _{v}(1,p)$.

\begin{theorem} \label{T:Hadamard}
Let $M$ be a complete, simply connected Riemannian manifold with nonpositive sectional
curvature. Then $M$ is diffeomorphic to the Euclidean space $\mathbb{R}^n $, $n=dim M $. More
precisely, at any point $p\in M $, the exponential mapping $ exp_{p}:T_{p}M \to M $ is a diffeomorphism.
\end{theorem}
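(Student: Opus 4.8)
The plan is to establish the stronger "more precisely" clause — that $\exp_p$ is a diffeomorphism for every $p$ — since the diffeomorphism $M\cong\mathbb{R}^n$ then follows at once by composing with any linear isomorphism $T_pM\cong\mathbb{R}^n$. The argument divides naturally into two stages: first I would show that $\exp_p$ is a local diffeomorphism at every point of $T_pM$, and then upgrade this to a global diffeomorphism using completeness and simple connectedness. This is the classical Cartan--Hadamard theorem, and both ingredients are standard in the references \cite{MP92,S96}.

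For the local stage I would first invoke completeness: by the Hopf--Rinow theorem already quoted in the excerpt, geodesics extend to all of $\mathbb{R}$, so $\exp_p$ is defined on all of $T_pM$, and every point of $M$ is joined to $p$ by a minimizing geodesic, which gives surjectivity. To see that $d(\exp_p)_v$ is nonsingular for every $v$, I would pass to Jacobi fields: writing $\gamma(t)=\exp_p(tv)$, one has $d(\exp_p)_v(w)=J(1)$, where $J$ is the Jacobi field along $\gamma$ with $J(0)=0$ and $\nabla_{\gamma'}J(0)=w$. The Jacobi equation $\nabla_{\gamma'}\nabla_{\gamma'}J+R(J,\gamma')\gamma'=0$ together with the nonpositivity of the sectional curvature, i.e. $\langle R(J,\gamma')\gamma',J\rangle\le 0$, gives
\[
\frac{d^2}{dt^2}\|J\|^2=2\|\nabla_{\gamma'}J\|^2-2\langle R(J,\gamma')\gamma',J\rangle\ge 0,
\]
so $t\mapsto\|J\|^2$ is convex. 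Since it vanishes with vanishing derivative at $t=0$ but is strictly convex near $t=0$ when $w\neq 0$, a nontrivial Jacobi field with $J(0)=0$ cannot vanish again; hence there are no conjugate points, $d(\exp_p)_v$ is injective and therefore an isomorphism, and $\exp_p$ is a local diffeomorphism.

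For the global stage I would endow $T_pM$ with the pulled-back metric $\exp_p^{*}\langle\,,\,\rangle$, which is a genuine Riemannian metric because $\exp_p$ is an immersion, and with respect to which $\exp_p$ is by construction a local isometry. The radial rays $t\mapsto tv$ map to the geodesics $\gamma_v$, which are defined for all $t$; hence these rays are complete geodesics of $\exp_p^{*}\langle\,,\,\rangle$ issuing from the origin, and Hopf--Rinow (completeness at a single point forcing geodesic completeness) shows that $(T_pM,\exp_p^{*}\langle\,,\,\rangle)$ is a complete Riemannian manifold. A local isometry from a complete connected manifold onto a connected manifold is a covering map, so $\exp_p$ is a Riemannian covering of $M$; since $M$ is simply connected, this covering has a single sheet, so $\exp_p$ is bijective. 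A bijective local diffeomorphism is a diffeomorphism, which finishes the argument.

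I expect the main obstacle to lie in this final stage, namely in showing that the local isometry $\exp_p$ is a covering map. The nonsingularity of $d(\exp_p)$ is a self-contained convexity estimate on a single ordinary differential equation, but passing from "local diffeomorphism" to "global diffeomorphism" genuinely relies on the completeness of the pulled-back metric together with the homotopy-lifting machinery of covering spaces (or, equivalently, a direct path-lifting argument exploiting that geodesics extend indefinitely). Some care is also needed to verify that the pullback is a bona fide metric and that the radial rays really correspond to geodesics downstairs, so that completeness upstairs can be concluded before simple connectedness is applied.
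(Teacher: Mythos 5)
Your proposal is correct: it is precisely the classical Cartan--Hadamard argument (no conjugate points via the convexity of $\|J\|^2$ for Jacobi fields under nonpositive curvature, then pullback metric, completeness, local-isometry-is-covering, and simple connectedness to kill the deck transformations). The paper itself gives no proof beyond citing do Carmo and Sakai, and your argument is exactly the proof found in those cited references, so there is nothing to reconcile.
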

\begin{proof}
See Lemma 3.2 of \cite{MP92}, p. 149 or Theorem 4.1 of \cite{S96}, p. 221.
\end{proof}

A complete simply connected Riemannian manifold of nonpositive
sectional   curvature  is called a {\it{Hadamard manifold}}. The
Theorem \ref{T:Hadamard} says that if $M$ is Hadamard manifold,
then $M$ has the same topology and differential structure of the
Euclidean space $\mathbb{R}^n$. Furthermore,  some
similar geometrical properties of the Euclidean space
$\mathbb{R}^n$ are known, such as, given two points there exists an unique
geodesic that joins them. {\it In this paper, all manifolds $M$
are assumed to be Hadamard finite dimensional}.

Take ${p}\in M $. Let  $ exp^{-1}_{p}:M\to T_{p}M$ be the  inverse
of the  exponential map which is also $C^{\infty }$.  Note that
$d({q}\, , \, p)\,=\,||exp^{-1}_{p}q||$, the map
$d^2(\,.\,,{p})\colon M\to\mathbb{R}$ is  $C^{\infty}$ and
\[
\grad \frac{1}{2}d^2(q,{p})=-exp^{-1}_{q}{p},
\]
see, for example, Proposition 4.8 of \cite{S96} page 108.

\subsection{Convexity on Hadamard manifold} \label{sec3}
In this section, we introduce some fundamental properties and
notations  of convex analysis on Hadamard manifolds that will be
used later. These properties can be
found, for instance, in \cite{U94, RAP97, XFL2000,  ChongLi2009}.

A set $\Omega\subset M$ is said to be {\it convex \/} if any
geodesic segment with end points in $\Omega$ is contained in
$\Omega$. A function $f:M\to\mathbb{R}$ is said to be {\it convex} if for any geodesic segment $\gamma:[a, b]\to M$ the composition $f\circ\gamma:[a,
b]\to\mathbb{R}$ is convex. Take $p\in M$. A vector $s \in T_pM$ is said
to be a {\it subgradient\/} of $f$ at $p$ if
\[
f(q) \geq f(p) + \langle s,\exp^{-1}_pq\rangle,
\]
for any $q\in M$. The set of all subgradients of $f$ at $p$,
denoted  by $\partial f(p)$, is called the {\it subdifferential\/}
of $f$ at $p$. It is known that if $f$ is convex then $\partial f(p)$ is a set non-empty, convex and compact, for each $p\in M$. In particular, given $p,q\in M$ and $u\in \partial f(p)$, $v\in \partial f(q)$, we have
\begin{equation}
\langle u,\exp^{-1}_pq\rangle\leq f(q)-f(p)\leq \langle v,-\exp^{-1}_qp\rangle.
\end{equation}
But this tell us, from next definition, that if $f$ is convex then $\partial f$ is a monotone vector fields on $M$.
\begin{definition} \label{def:mps}
Let $\Omega\subset M$ be an open convex set and $X$ a point-set vector fields on $M$. $X$ is said to be {\it monotone\/} on $\Omega$, if
\[
\langle u,\exp^{-1}_pq\rangle\leq \langle v,-\exp^{-1}_qp\rangle,\qquad p,q\in \Omega, \quad u\in X(p), \quad v\in X(q).
\]
\end{definition}
\begin{remark}
This last definition has appeared in \cite{ChongLi2009}, but also it is worth to point out that an equivalent definition has appeared in \cite{XFL2000}.
\end{remark}
Let $C\subset M$ be a set nonempty, convex and closed. It is well-known (see Corollary 3.1 of \cite{FO2002}) that for each $p\in M$ there exists a unique element $\tilde{p}\in C$ such that 
\[
\langle \exp^{-1}_{\tilde{p}}p,\exp^{-1}_{\tilde{p}}q\rangle\leq 0,\qquad q\in C.
\]
In this case, $\tilde{p}$ is the projection of $p$ onto the set $C$ which we will denote by $\Pi_C(p)$.

Let $D\subset\mathbb{R}^n$ be a convex set, and $p\in D$. Following \cite{ChongLi2009}, we define the normal cone to $D$ at $p$ by:
\[
N_D(p):=\{w\in T_pM:\langle w, \exp_{p}^{-1}q \rangle\leq 0, q\in D\}.
\]
The previous definition holds just when $M$ is of the Hadamard type. A more general definition has appeared in \cite{Boris2011}.

\section{Proximal Point and Weak Sharp Minima on Riemannian Manifolds}\label{sec:3}
The definition of weak minima sharp as well as some of the basic related results that are used in this paper were introduced, in the Riemannian context, by Li et al.~\cite{Boris2011} for constrained optimization problems in the Riemannian context.  Assuming that the solution set of   Problem \eqref{po:conv} is a set of weak sharp minimizers associated to the problem in question, we proved that the proximal point method \eqref{eq:prox1} has finite termination. We recall first some basic facts on the sequence generated by proximal point method \eqref{eq:prox1} (see, for instance, \cite{FO2002}).  In the remainder of this paper $ f: M\to\mathbb{R}$ represent a convex function, $U$ denote the solution set of Problem~\ref{po:conv} and $\{p^k\}$ is the sequence generated by \eqref{eq:prox1}. Moreover, we suppose that $U$ is nonempty and closed in $M$. 

Assuming that the sequence $\{p^k\}$ is well defined, it follows that
\[
(\lambda_k/2)d^2(p^{k+1},p^k)\leq f(p^k)-f(p^{k+1}).
\]
Hence, if $f$ is bounded below, then 
\begin{equation}\label{eq:1-1}
\sum_{k=0}^{+\infty}(\lambda_k/2)d^2(p^{k+1},p^k)<+\infty.
\end{equation}
The following proposition gathers some of the main results of \cite{FO2002} associated  to the sequence $\{p^k\}$.
\begin{proposition}\label{propo:tf1}
If $M$ is a Hadamard manifold, then the following statements hold: 
\begin{itemize}
\item [i)] $\{p^k\}$ is well defined and is characterized by
\begin{equation}
\lambda_k (exp_{p_{k+1}}^{-1}p_k)\in \partial f(p_{k+1});
\end{equation}
\item [ii)] If $\sum_{k=0}^{+\infty}1/\lambda_k=+\infty$ and the solution set of the problem \eqref{po:conv} is nonempty, then the sequence $\{p^k\}$ converges to a solution of the problem \eqref{po:conv}.
\end{itemize}
\end{proposition}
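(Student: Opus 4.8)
The plan is to treat the proposition as two essentially separate tasks: the variational characterization in (i), obtained from coercivity, strong convexity and a first-order optimality condition, and the convergence in (ii), obtained from a Fej\'er-monotonicity argument whose only nontrivial geometric input is the nonpositive curvature of $M$. Throughout I write $\phi_k(q)=f(q)+(\lambda_k/2)d^2(q,p^k)$ for the proximal objective in \eqref{eq:prox1}.

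For (i), I would first settle existence and uniqueness of the minimizer defining $p^{k+1}$. The subgradient inequality for $f$ at $p^k$ gives $f(q)\geq f(p^k)-\|s\|\,d(q,p^k)$ for any $s\in\partial f(p^k)$, so the quadratic term forces $\phi_k(q)\to+\infty$ as $d(q,p^k)\to+\infty$. Coercivity, together with continuity of $\phi_k$ and the Hopf--Rinow theorem (closed bounded sets are compact), then yields a minimizer on the compact sublevel sets. Uniqueness follows because on a Hadamard manifold $q\mapsto \tfrac12 d^2(q,p^k)$ is $1$-strongly convex, a consequence of the Rauch comparison theorem and nonpositive curvature, so that $\phi_k$ is $\lambda_k$-strongly convex. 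The characterization is the first-order condition $0\in\partial\phi_k(p^{k+1})$: since the quadratic term is smooth with gradient $-\exp^{-1}_{p^{k+1}}p^k$ at $p^{k+1}$ (using $\grad\tfrac12 d^2(\cdot,p^k)=-\exp^{-1}_{(\cdot)}p^k$), the sum rule gives $\lambda_k\exp^{-1}_{p^{k+1}}p^k\in\partial f(p^{k+1})$.

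For (ii), fix any $x\in U$. The geometric engine is the law-of-cosines comparison valid on Hadamard manifolds; applied to the triangle with vertices $x$, $p^k$, $p^{k+1}$ and the angle at $p^{k+1}$ it gives
\[
d^2(p^{k+1},x)\leq d^2(p^k,x)-d^2(p^{k+1},p^k)+2\langle \exp^{-1}_{p^{k+1}}p^k,\exp^{-1}_{p^{k+1}}x\rangle.
\]
Writing $s_k=\lambda_k\exp^{-1}_{p^{k+1}}p^k\in\partial f(p^{k+1})$ from part (i), the subgradient inequality at $p^{k+1}$ together with $f(x)\leq f(p^{k+1})$ bounds the inner product above by $(1/\lambda_k)(f(x)-f(p^{k+1}))\leq 0$, so that
\[
d^2(p^{k+1},x)\leq d^2(p^k,x)-\frac{2}{\lambda_k}\bigl(f(p^{k+1})-f(x)\bigr)\leq d^2(p^k,x).
\]
This is Fej\'er monotonicity with respect to $U$, whence $\{p^k\}$ is bounded and $\{d(p^k,x)\}$ converges.

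Finally I would telescope the displayed inequality to get $\sum_k (2/\lambda_k)\bigl(f(p^{k+1})-f(x)\bigr)\leq d^2(p^0,x)<+\infty$. The descent inequality $(\lambda_k/2)d^2(p^{k+1},p^k)\leq f(p^k)-f(p^{k+1})$ stated above makes $\{f(p^k)\}$ non-increasing, hence the nonnegative terms $f(p^{k+1})-f(x)$ are non-increasing; if their limit were positive the hypothesis $\sum_k 1/\lambda_k=+\infty$ would force the series to diverge, a contradiction, so $f(p^k)\to\min f$. Boundedness of $\{p^k\}$ and compactness of closed bounded sets produce a subsequential limit $\bar p$; continuity of $f$ gives $f(\bar p)=\min f$, so $\bar p\in U$, and Fej\'er monotonicity applied with $x=\bar p$ promotes subsequential convergence to convergence of the whole sequence to $\bar p$. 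I expect the main obstacle to be the first displayed inequality: pinning down the correct form and orientation of the nonpositive-curvature comparison and checking that this is precisely where the Hadamard hypothesis and completeness are used; the remaining steps are the standard quasi-Fej\'er bookkeeping.
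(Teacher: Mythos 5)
Your proof is correct, but it is worth noting that the paper itself offers no proof of this proposition at all: it is presented as a compilation of results from the reference [FO2002] (Ferreira--Oliveira), with the sentence ``The following proposition gathers some of the main results of \cite{FO2002}'' standing in for an argument. What you have done is reconstruct, in a self-contained way, essentially the argument of that cited reference: part (i) via coercivity of the proximal objective, strong convexity of $q\mapsto\tfrac12 d^2(q,p^k)$ on a Hadamard manifold, and the first-order optimality condition with the smooth-plus-convex sum rule; part (ii) via the law-of-cosines comparison inequality, Fej\'er monotonicity with respect to the solution set $U$, the telescoping bound combined with $\sum_k 1/\lambda_k=+\infty$ to get $f(p^k)\to\min f$, and the standard promotion of subsequential convergence to full convergence. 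All of these steps are sound, including the orientation of the comparison inequality, which you state in exactly the correct form for nonpositive curvature.

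One small point deserves attention: you define the proximal objective as $\phi_k(q)=f(q)+(\lambda_k/2)d^2(q,p^k)$, whereas the paper's iteration \eqref{eq:prox1} uses $f(p)+\lambda_k d^2(p,p^k)$ without the factor $1/2$. This discrepancy is actually internal to the paper: with \eqref{eq:prox1} taken literally, the first-order condition reads $2\lambda_k\exp^{-1}_{p^{k+1}}p^k\in\partial f(p^{k+1})$, not the characterization $\lambda_k\exp^{-1}_{p_{k+1}}p_k\in\partial f(p_{k+1})$ stated in item (i). Your normalization is the one that makes the stated characterization exact, so you have silently repaired the paper's inconsistency rather than introduced an error; either convention leaves the convergence argument in (ii) untouched, since only positivity of $\lambda_k$ and divergence of $\sum_k 1/\lambda_k$ are used there. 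If you wanted to be fully faithful to \eqref{eq:prox1} as written, you would carry the factor $2\lambda_k$ through part (i) and note that the proposition's displayed inclusion holds up to that constant.
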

Note that if $\lambda_{-}, \lambda_{+}\in \mathbb{R}$ are such that $0<\lambda_{-}\leq\lambda_{k}\leq \lambda_+$, $k\in\mathbb{N}$, in particular $\sum_{k=0}^{+\infty}1/\lambda_k=+\infty$. Assuming that $0<\lambda_{-}\leq\lambda_{k}\leq \lambda_+$, $k\in\mathbb{N}$ and taking in consideration that $\|\exp^{-1}_qp\|=d(q,p)$ (see Section~\ref{sec3}), from \eqref{eq:1-1} combined with  item $i)$ of the previous proposition, it follows that
\begin{corollary}\label{propo:tf2} 
If $\{(p^{k},g^k)\}\in TM$ ($TM$ denote the tangent boundle, see Section~\ref{sec2}) is a sequence such that  $g^{k+1}=\lambda_k (exp_{p_{k+1}}^{-1}p_k)\in \partial f(p_{k+1})$, then
\[
\sum_{k=0}^{+\infty}\|g^k\|<+\infty.
\]
In particular, for each $\alpha>0$ there exists $k_0\in\mathbb{N}$ such that $\|g^{k_0}\|<\alpha$.
\end{corollary}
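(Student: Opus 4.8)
The corollary follows by combining the energy estimate \eqref{eq:1-1} with the characterization in item $i)$ of Proposition \ref{propo:tf1}, under the two-sided bound $0<\lambda_{-}\leq\lambda_{k}\leq\lambda_{+}$. The plan is as follows. First I would rewrite the summable quantity \eqref{eq:1-1} in terms of the distances alone: since $\lambda_k\geq\lambda_{-}>0$ for every $k$, the bound $(\lambda_k/2)d^2(p^{k+1},p^k)\geq(\lambda_{-}/2)d^2(p^{k+1},p^k)$ gives
\[
\frac{\lambda_{-}}{2}\sum_{k=0}^{+\infty}d^2(p^{k+1},p^k)\leq\sum_{k=0}^{+\infty}\frac{\lambda_k}{2}d^2(p^{k+1},p^k)<+\infty,
\]
so that $\sum_{k=0}^{+\infty}d^2(p^{k+1},p^k)<+\infty$.

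Next I would pass from the distances to the norms of the subgradients $g^k$. By hypothesis $g^{k+1}=\lambda_k\,(\exp^{-1}_{p^{k+1}}p^{k})$, and using the identity $\|\exp^{-1}_{q}p\|=d(q,p)$ recorded in Section~\ref{sec3} together with the linearity of the norm on the tangent space $T_{p^{k+1}}M$, I obtain $\|g^{k+1}\|=\lambda_k\,d(p^{k+1},p^{k})\leq\lambda_{+}\,d(p^{k+1},p^{k})$. To compare the series of norms with the series of distances it is convenient to work with squares: from the previous inequality, $\|g^{k+1}\|^2\leq\lambda_{+}^2\,d^2(p^{k+1},p^{k})$, so $\sum_{k}\|g^{k+1}\|^2\leq\lambda_{+}^2\sum_{k}d^2(p^{k+1},p^{k})<+\infty$. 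In particular $\sum_{k=0}^{+\infty}\|g^k\|^2<+\infty$, which already forces $\|g^k\|\to0$ and hence, for any $\alpha>0$, the existence of $k_0$ with $\|g^{k_0}\|<\alpha$.

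The one delicate point concerns the stated conclusion $\sum_{k=0}^{+\infty}\|g^k\|<+\infty$, rather than merely the convergence of the sum of squares. Here the summability of the squared distances together with the uniform upper bound $\lambda_{+}$ does not by itself yield summability of the first powers, so I would use the upper bound on $\lambda_k$ to write $\|g^{k+1}\|=\lambda_k\,d(p^{k+1},p^{k})\leq\lambda_{+}\,d(p^{k+1},p^{k})$ and reduce matters to showing $\sum_k d(p^{k+1},p^{k})<+\infty$; this last statement is exactly the finite-length property of the proximal sequence, which is the genuine content one must either cite from \cite{FO2002} or extract from the argument establishing \eqref{eq:1-1}. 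This reduction — from summability of $\|g^k\|$ to finiteness of the total geodesic length of the iterate sequence — is the main obstacle, and it is the step where the Hadamard structure (through the identity $\|\exp^{-1}_{q}p\|=d(q,p)$ and the characterization in Proposition~\ref{propo:tf1}) is used in full. Once finite length is in hand, the final clause is immediate, since the convergence of a series of nonnegative terms implies that its general term tends to zero.
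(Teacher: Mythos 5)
Your derivation follows exactly the route the paper itself takes: the paper's entire ``proof'' of this corollary is the sentence preceding it, which asserts that the statement follows from \eqref{eq:1-1}, item $i)$ of Proposition~\ref{propo:tf1}, the identity $\|\exp^{-1}_qp\|=d(q,p)$, and the bounds $0<\lambda_{-}\leq\lambda_k\leq\lambda_{+}$, with no further detail. Carried out carefully, as you do, this gives $\|g^{k+1}\|=\lambda_k\, d(p^{k+1},p^k)$ and hence
\[
\sum_{k}\|g^{k+1}\|^{2}\;=\;\sum_{k}\lambda_k^{2}\,d^{2}(p^{k+1},p^k)\;\leq\;2\lambda_{+}\sum_{k}\frac{\lambda_k}{2}\,d^{2}(p^{k+1},p^k)\;<\;+\infty,
\]
so $\|g^k\|\to 0$ and, in particular, for every $\alpha>0$ there is $k_0$ with $\|g^{k_0}\|<\alpha$. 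This last clause is the only part of the corollary ever used: in the main theorem one merely needs one index $k_0$ with $\|g^{k_0}\|<\alpha$ so that Proposition~\ref{propo:tf3} can be applied to conclude $p^{k_0}\in U$. So your argument establishes, by the paper's own method, everything the paper actually needs.

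The ``delicate point'' you flag is not a defect of your reasoning but a genuine gap in the paper. From square-summability of the step lengths $d(p^{k+1},p^k)$ and the bound $\lambda_k\leq\lambda_{+}$ one obtains only $\sum_k\|g^k\|^{2}<+\infty$, and $\ell^2$-summability does not imply $\ell^1$-summability (take $d(p^{k+1},p^k)\sim 1/(k+1)$ as a formal test: \eqref{eq:1-1} holds while $\sum_k \lambda_k d(p^{k+1},p^k)$ diverges). The paper's one-sentence proof silently commits exactly this non sequitur when it asserts $\sum_{k}\|g^k\|<+\infty$. As you observe, the missing ingredient is the finite-length property $\sum_k d(p^{k+1},p^k)<+\infty$ of the proximal trajectory, which is neither contained in \eqref{eq:1-1}, nor in item $i)$ of Proposition~\ref{propo:tf1}, nor proved in \cite{FO2002}; it is a substantially deeper fact about proximal sequences of convex functions and is nowhere addressed in the paper. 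The honest reading of the corollary is therefore the weaker statement $\sum_k\|g^k\|^{2}<+\infty$ (equivalently, $\|g^k\|\to 0$), which your proof establishes completely and which suffices for the finite-termination theorem; your reduction of the stronger displayed claim to finite length correctly localizes what would have to be added to repair the paper's statement as written.
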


\begin{definition}
The set $U$ is said be the set of weak sharp minimizers for Problem~\ref{po:conv} with modulus $\alpha>0$ if
\[
f(q)\geq f(p)+\alpha d_{U}(q), \qquad p\in U, \quad q\in M.
\]
\end{definition}

Next proposition establish one characterization of the set of weak sharp minima on Riemannian manifolds.
\begin{proposition}\label{propo:ms1}
A necessary and sufficient condition for $U$ be the set of weak sharp minima for Problem~\ref{po:conv} with modulus $\alpha>0$ is that
\[
\alpha\mathcal{B}_p\cap N_{U}(p)\subset\partial f(p), \qquad p\in U.
\] 
\end{proposition}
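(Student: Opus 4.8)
The plan is to prove the two implications separately, drawing on two facts available from the preceding material. The first is the projection characterization of $\Pi_U$ (applicable since $U$ is assumed nonempty, convex and closed): for $q\in M$ with $\tilde q=\Pi_U(q)$ one has $\langle \exp_{\tilde q}^{-1}q,\exp_{\tilde q}^{-1}q'\rangle\le 0$ for all $q'\in U$, i.e. $\exp_{\tilde q}^{-1}q\in N_U(\tilde q)$, and moreover $d_U(q)=d(\tilde q,q)=\|\exp_{\tilde q}^{-1}q\|$. The second is the standard nonexpansiveness of the inverse exponential map on a Hadamard manifold, $\|\exp_p^{-1}q-\exp_p^{-1}q'\|\le d(q,q')$, which comes from the law-of-cosines comparison for nonpositively curved spaces. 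I would also record at the outset that under either hypothesis $f$ is constant on $U$, with common value $f^\ast$: taking $v=0\in\alpha\mathcal{B}_p\cap N_U(p)$ in the subdifferential inclusion forces $0\in\partial f(p)$ and hence $f\equiv f^\ast$ on $U$, while the weak sharp inequality, applied both ways between two points of $U$, gives the same conclusion.

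For sufficiency, I assume $\alpha\mathcal{B}_p\cap N_U(p)\subset\partial f(p)$ for every $p\in U$, fix $q\in M$, and set $\tilde q=\Pi_U(q)$. If $q\in U$ the weak sharp estimate is immediate from $f\equiv f^\ast$, so I may assume $w:=\exp_{\tilde q}^{-1}q\neq 0$. By the projection property $w\in N_U(\tilde q)$, so $v:=\alpha w/\|w\|$ lies in $\alpha\mathcal{B}_{\tilde q}\cap N_U(\tilde q)\subset\partial f(\tilde q)$. Writing the subgradient inequality for $v$ at $\tilde q$ and using $\langle v,w\rangle=\alpha\|w\|$ yields $f(q)\ge f(\tilde q)+\alpha\|w\|=f^\ast+\alpha d_U(q)$, which is precisely the required bound for every $p\in U$.

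For necessity, I assume the weak sharp property with modulus $\alpha$, fix $p\in U$ and $v\in\alpha\mathcal{B}_p\cap N_U(p)$, and aim at $f(q)\ge f(p)+\langle v,\exp_p^{-1}q\rangle$ for all $q$. Given $q$, let $\tilde q=\Pi_U(q)$ and split $\langle v,\exp_p^{-1}q\rangle=\langle v,\exp_p^{-1}q-\exp_p^{-1}\tilde q\rangle+\langle v,\exp_p^{-1}\tilde q\rangle$. The second term is $\le 0$ since $\tilde q\in U$ and $v\in N_U(p)$; the first is bounded, via Cauchy--Schwarz with $\|v\|\le\alpha$ and the nonexpansiveness fact, by $\alpha\|\exp_p^{-1}q-\exp_p^{-1}\tilde q\|\le\alpha\,d(q,\tilde q)=\alpha\,d_U(q)$. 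Combining these with the weak sharp inequality $f(q)-f(p)=f(q)-f(\tilde q)\ge\alpha\,d_U(q)$ gives the subgradient inequality, so $v\in\partial f(p)$.

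The only ingredient that is not purely formal is the comparison bound $\|\exp_p^{-1}q-\exp_p^{-1}\tilde q\|\le d(q,\tilde q)$, needed solely in the necessity direction; everything else is algebra with the definitions of $N_U$, $\mathcal{B}_p$, $\partial f$ and $\Pi_U$. I therefore expect the main (though standard) obstacle to be invoking the correct curvature comparison: one must use that in a Hadamard manifold the angle of a geodesic triangle at a vertex does not exceed the Euclidean comparison angle, equivalently $d^2(q,\tilde q)\ge\|\exp_p^{-1}q\|^2+\|\exp_p^{-1}\tilde q\|^2-2\langle\exp_p^{-1}q,\exp_p^{-1}\tilde q\rangle$, which is exactly the stated estimate. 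A secondary point requiring care is confirming that $f$ takes the common value $f^\ast$ on $U$, since this is what lets me replace $f(p)$ by $f(\tilde q)$ throughout.
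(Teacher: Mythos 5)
Your proof is correct, but it is a genuinely different argument from the one in the paper. The paper does not prove the equivalence directly at all: it observes that for the unconstrained problem the constraint set is $M$ itself, so $N_M(p)=\{0\}$ and the closure of $\partial f(p)+N_M(p)$ is just $\partial f(p)$ (since $\partial f(p)$ is closed), and then invokes Theorems 4.6 and 5.5 of the reference of Li, Mordukhovich, Wang and Yao on weak sharp minima for constrained problems on Riemannian manifolds. You instead give a self-contained two-implication proof: sufficiency via the projection $\tilde q=\Pi_U(q)$, the inclusion $\exp_{\tilde q}^{-1}q\in N_U(\tilde q)$, and the subgradient inequality applied to $v=\alpha\exp_{\tilde q}^{-1}q/\|\exp_{\tilde q}^{-1}q\|$; necessity via the decomposition $\langle v,\exp_p^{-1}q\rangle=\langle v,\exp_p^{-1}q-\exp_p^{-1}\tilde q\rangle+\langle v,\exp_p^{-1}\tilde q\rangle$, bounding the first term by $\alpha\,d_U(q)$ through Cauchy--Schwarz and the comparison estimate $\|\exp_p^{-1}q-\exp_p^{-1}\tilde q\|\le d(q,\tilde q)$, which you correctly identify as the law-of-cosines inequality for nonpositive curvature and as the only genuinely geometric ingredient. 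What your route buys is transparency: it makes explicit exactly where the Hadamard hypothesis is used, and it frees the proposition from dependence on the external constrained-case machinery; what the paper's route buys is brevity and consistency with the source it generalizes. Two small streamlinings: since $U$ is by definition the solution set of the problem, $f$ is automatically constant on $U$, so your preliminary argument that $f\equiv f^\ast$ (via $0\in\partial f(p)$ or the two-way sharp inequality) is unnecessary; and you should note that the convexity of $U$ needed to define $\Pi_U$ and $N_U$ follows from the convexity of $f$, rather than listing it as an assumption of the paper, which assumes only that $U$ is nonempty and closed.
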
 
\begin{proof}
Note that $N_M(p)=\{0\}$ and $\partial f(p)$ is a closed set, for each $p\in M$. Hence, the closure of the set $\partial f(p)+N_M(p)$ is equal to $\partial f(p)$, and the proof of this proposition follows from Theorem 4.6 and 5.5 of \cite{Boris2011}.  
\end{proof}
The following proposition is the key of our paper, since it is fundamental in the proof of the central result of this paper. 
\begin{proposition}\label{propo:tf3}
Suppose that $U$ is the set of weak sharp minima for Problem~\ref{po:conv} with modulus $\alpha>0$. If there exist $q\in M$ and $w\in T_qM$ such that $\|w\|< \alpha$ and $w\in\partial f(q)$, then $q\in U$.
\end{proposition}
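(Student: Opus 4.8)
The plan is to argue by contradiction, playing off the two inequalities that are simultaneously available at $q$: the subgradient inequality coming from $w\in\partial f(q)$, and the weak sharp minima inequality. First I would assume $q\notin U$. Since $U$ is nonempty and closed, and since on a Hadamard manifold bounded closed sets are compact (Hopf--Rinow), the infimum defining $d_U(q)=\inf\{d(p,q):p\in U\}$ is attained at some $\bar q\in U$ with $d_U(q)=d(q,\bar q)>0$. (Because the solution set of a convex function on a Hadamard manifold is geodesically convex, one may equivalently take $\bar q=\Pi_U(q)$ and invoke the projection recalled in Section~\ref{sec3}.)

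Next I would extract an upper bound on $f(q)-f(\bar q)$. Applying the defining subgradient inequality for $w\in\partial f(q)$ with the test point $\bar q$ gives $f(\bar q)\geq f(q)+\langle w,\exp^{-1}_q\bar q\rangle$, so that, using Cauchy--Schwarz together with $\|\exp^{-1}_q\bar q\|=d(q,\bar q)=d_U(q)$,
\[
f(q)-f(\bar q)\leq-\langle w,\exp^{-1}_q\bar q\rangle\leq\|w\|\,d_U(q).
\]

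For the matching lower bound I would invoke the weak sharp minima hypothesis with $p=\bar q\in U$ and the point $q$, which yields $f(q)\geq f(\bar q)+\alpha\,d_U(q)$, i.e. $f(q)-f(\bar q)\geq\alpha\,d_U(q)$. Chaining this with the previous estimate gives $\alpha\,d_U(q)\leq\|w\|\,d_U(q)$; dividing by $d_U(q)>0$ forces $\alpha\leq\|w\|$, contradicting the hypothesis $\|w\|<\alpha$. Hence $q\in U$.

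The only delicate point is guaranteeing that the distance $d_U(q)$ is realized by a genuine nearest point $\bar q\in U$, so that $\exp^{-1}_q\bar q$ is an honest tangent vector of norm exactly $d_U(q)$ that can be fed into the subgradient inequality; this is precisely where the standing assumption that $U$ is closed (together with completeness of $M$, or alternatively convexity of $U$ and the well-definedness of $\Pi_U$) is used. Everything after that is a direct combination of the subgradient inequality, the weak sharp inequality, and Cauchy--Schwarz, so I expect no further obstacle.
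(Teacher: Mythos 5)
Your proof is correct, but it takes a genuinely different route from the paper's. The paper argues dually: assuming $q\notin U$, it builds the scaled vector $\tilde{q}=\alpha\,\exp^{-1}_{p}q/d(q,p)$ at the projection $p=\Pi_U(q)$, uses the projection inequality to show $\tilde{q}\in\alpha\mathcal{B}_p\cap N_U(p)$, invokes the characterization of weak sharp minima (Proposition~\ref{propo:ms1}, which rests on Theorems 4.6 and 5.5 of \cite{Boris2011}) to conclude $\tilde{q}\in\partial f(p)$, and then applies monotonicity of the subdifferential to the pair $\tilde{q}\in\partial f(p)$, $w\in\partial f(q)$ to derive $\alpha\leq\|w\|$. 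You instead work primally: you never touch the normal cone, Proposition~\ref{propo:ms1}, or monotonicity, and simply chain the subgradient inequality at $q$ (tested at a nearest point $\bar q\in U$) against the defining inequality of weak sharpness, obtaining
\[
\alpha\, d_U(q)\;\leq\; f(q)-f(\bar q)\;\leq\;\|w\|\,d_U(q),
\]
and dividing by $d_U(q)>0$. Both arguments are sound; yours is more elementary and self-contained, depending only on the two definitions plus Cauchy--Schwarz and the attainment of $d_U(q)$ (which, as you note, follows from closedness of $U$ and Hopf--Rinow --- and could even be dispensed with by an $\epsilon$-argument with near-minimizers, so the ``delicate point'' you flag is not essential). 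What the paper's heavier route buys is compatibility with the dual description of weak sharpness: its argument would go through verbatim if the hypothesis were given as the inclusion $\alpha\mathcal{B}_p\cap N_U(p)\subset\partial f(p)$ rather than as the primal inequality, and it mirrors the classical Euclidean argument of Ferris, which is the program the paper is following.
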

\begin{proof}
Let us suppose, by contraction, that $q\notin U$ and define 
\[
\tilde{q}=\alpha\frac{\exp^{-1}_{p}q}{d(q,p)},\qquad p:=\Pi_U(q).
\]
From the definition of $p$, it follows that
\[
\langle \exp^{-1}_{p}q,\exp^{-1}_{p}\bar{q}\rangle\leq 0,\qquad \bar{q}\in U.
\]
Combining definition of $\tilde{q}$ with last inequality and definition of the normal cone $N_U(p)$, we obtain $\tilde{q}\in N_U(p)$.
Hence, since $\tilde{q}\in \alpha\mathcal{B}_{\tilde{q}}$, we conclude that 
\[
\tilde{q}\in   \alpha\mathcal{B}_p\cap N_{U}(p).
\]
Now, using that $U$ is the set of weak sharp minima for Problem~\ref{po:conv} with modulus $\alpha>0$, $p\in U$ and Proposition~\ref{propo:ms1}, last inclusion implies that 
$\tilde{q}\in \partial f(p)$. Recall that $\partial f$ is a monotone field. So, since $w\in \partial f(q)$ and $\tilde{q}\in \partial f(p)$, we have
\begin{equation}\label{eq:cs1}
\langle \tilde{q},\exp^{-1}_pq\rangle\leq \langle w,-\exp^{-1}_qp\rangle.
\end{equation}
Taking account that $\langle w,-\exp^{-1}_qp\rangle\leq \|w\|\|\exp^{-1}_qp\|$, $\|\exp^{-1}_qp\|=d(q,p)$ and definition of $\tilde{q}$, from inequality \eqref{eq:cs1}, we obtain
\[
\|w\|\geq \alpha,
\]
which contradicts the inequality $\|w\|<\alpha$, and the desired result is proved.
\end{proof}
Next we present the central result of this paper.
\begin{theorem}
Suppose that $U$ is the set of weak sharp minima for Problem~\ref{po:conv} with modulus $\alpha>0$ and let $p^0\in\mathbb{R}^n$. If $\{\lambda_k\}$ is a sequence of real numbers and $\lambda_{-},\lambda_{+}$ positive constants such that $\lambda_{-}\leq\lambda_{k}\leq \lambda_+$, $k\in\mathbb{N}$, then the proximal point method terminates in a finite number of iterations.
\end{theorem}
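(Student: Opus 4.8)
The plan is to show that the iterates $\{p^k\}$ reach the solution set $U$ after only finitely many steps, and that once some iterate lies in $U$ the proximal map fixes it, so the sequence stabilizes; this is precisely what ``finite termination'' means. The argument is essentially an assembly of the three preparatory results: Corollary~\ref{propo:tf2} produces an iterate at which $f$ has an arbitrarily small subgradient, and Proposition~\ref{propo:tf3} turns such a small subgradient into membership in $U$.

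First I would verify the hypotheses of Corollary~\ref{propo:tf2}. Since $U$ is the (nonempty) solution set of Problem~\eqref{po:conv}, $f$ attains its minimum and is therefore bounded below, so inequality~\eqref{eq:1-1} holds; together with the bound $0<\lambda_{-}\le\lambda_k\le\lambda_{+}$ this puts us exactly in the situation of the corollary. Setting $g^{k+1}:=\lambda_k\,\exp_{p^{k+1}}^{-1}p^k$, Proposition~\ref{propo:tf1}(i) gives $g^{k+1}\in\partial f(p^{k+1})$, and Corollary~\ref{propo:tf2} yields $\sum_{k}\|g^k\|<+\infty$, hence in particular an index $k_0$ with $\|g^{k_0}\|<\alpha$ for the given modulus $\alpha>0$. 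By the indexing above, $g^{k_0}\in\partial f(p^{k_0})$.

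Next I would invoke Proposition~\ref{propo:tf3} with $q=p^{k_0}$ and $w=g^{k_0}$: since $w\in\partial f(p^{k_0})$ with $\|w\|<\alpha$ and $U$ is a set of weak sharp minima with modulus $\alpha$, the proposition forces $p^{k_0}\in U$, i.e.\ $p^{k_0}$ is already a minimizer of $f$.

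It then remains to see that reaching $U$ halts the method. Because $p^{k_0}\in U$ minimizes $f$, for every $p\in M$ one has $f(p)+\lambda_{k_0}d^2(p,p^{k_0})\ge f(p)\ge f(p^{k_0})=f(p^{k_0})+\lambda_{k_0}d^2(p^{k_0},p^{k_0})$, so $p^{k_0}$ itself minimizes the proximal subproblem~\eqref{eq:prox1}. By the uniqueness of the proximal minimizer recorded in Proposition~\ref{propo:tf1}, it follows that $p^{k_0+1}=p^{k_0}$, and inductively $p^k=p^{k_0}$ for all $k\ge k_0$; the sequence is eventually constant, which is the claimed finite termination. I do not expect a genuine obstacle here: the only points requiring care are the bookkeeping that places $g^{k_0}$ in $\partial f(p^{k_0})$ (rather than at the neighboring index) and the clean verification that a point of $U$ is a fixed point of the proximal map.
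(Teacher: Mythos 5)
Your proposal is correct and follows exactly the route the paper takes: its proof is the one-line remark that the theorem ``follows immediately'' from Proposition~\ref{propo:tf1}, Corollary~\ref{propo:tf2} and Proposition~\ref{propo:tf3}, which is precisely the chain you assemble (characterization of the iterates as subgradients, existence of an index $k_0$ with $\|g^{k_0}\|<\alpha$, and then $p^{k_0}\in U$). Your final step --- verifying that a point of $U$ is a fixed point of the proximal map, so the sequence stabilizes --- is a detail the paper leaves implicit, and your write-up supplies it cleanly.
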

\begin{proof}
The proof follows immediately from Proposition~\ref{propo:tf1} combining with Corollary~\ref{propo:tf2} and Proposition~\ref{propo:tf3}.
\end{proof}
\section{Final Remarks}\label{sec4}
In this paper we recall the notion of weak sharp minima for unconstrained optimization problem on Riemannian manifolds and we explored properties of weak sharp minimum on Hadamard manifold to establish the finite termination of the proximal point method.



\end{document}